\documentclass[12pt,psamsfonts]{amsart}

\usepackage{amssymb,eucal,epsfig}

\textheight = 8.39in
\textwidth = 6.0in
\oddsidemargin=0.3in
\evensidemargin=0.3in
\topmargin=0.0in
\headsep = 0.35in
\headheight = 0.17in
\topmargin = 0.38in
\topskip=0.14in
\footskip=0.42in

\def\Dj{\hbox{D\kern-.73em\raise.30ex\hbox{-} \raise-.30ex\hbox{}}}
\def\dj{\hbox{d\kern-.33em\raise.80ex\hbox{-} \raise-.80ex\hbox{\kern-.40em}}}

%\textheight 22cm
%\textwidth 16cm

%\oddsidemargin 0.1cm
%\evensidemargin 0.1cm

        %added
          %added
      %added

\def\<{\langle}                     %added
\def\>{\rangle}                     %added
            %added

\def\N{\mathbb N}

\def\0{\mathbb 0}

\theoremstyle{plain}
\newtheorem{theorem}{Theorem}[section]

\newtheorem{lemma}{Lemma}[section]

\theoremstyle{definition}
\newtheorem{definition}{Definition}[section]

\theoremstyle{remark}
\newtheorem{remark}{Remark}[section]

\numberwithin{equation}{section}

\begin{document}
\setcounter{page}{1}

\vspace{25mm}

\baselineskip=0.25in
%\bibliography{catalan}

\title[\tiny Some$\;$considerations$\;$of$\;$matrix$\;$equations$\;$using$\;$the$\;$concept$\;$of$\;$reproductivity$\;$]
{Some considerations of matrix equations \\
using the concept of reproductivity}
\vspace{15mm}

\author[Branko Male\v sevi\' c]{Branko Male\v sevi\' c$^1$}
\address{$^1$Department of Applied Mathematics
\newline \indent Faculty of Electrical Engineering, University of Belgrade,
\newline \indent Serbia}
\email{malesevic@etf.rs}

\author[Biljana Radi\v ci\' c]{Biljana Radi\v ci\' c$^2$}
\address{$^2$ part-time job
\newline \indent Department of Mathematics, Physics and Descriptive Geometry
\newline \indent Faculty of Civil Engineering, University of Belgrade,
\newline \indent Serbia}
\email{radicic.biljana@yahoo.com}

\keywords{Reproductive equation; reproductive solution; matrix system. \\
\indent 2010 {\it Mathematics Subject Classification}. 15A24.}

\begin{abstract}
In this paper we analyse {\sc Cline}'s  matrix equation, generalized {\sc Penrose}'s matrix
system and a matrix system for $k$-commutative $\{1\}$-inverses. We determine reproductive and
non-reproductive general solutions of analysed matrix equation and analysed matrix systems.
\end{abstract}

\maketitle

\section{Introduction}

\noindent
In this paper we determine general and reproductive general solutions of analysed matrix equation
and analysed matrix systems. We are going to use the concept of reproductivity in order to prove
that certain formulas represent the general solutions of analysed matrix equation and analysed
matrix systems. The concept of reproductive equations was introduced by {\sc S.B. Pre\v si\'c}
\cite{Presic68} in 1968.

\noindent
Let $S$ be a given non-empty set and $J$ be a given unary relation of $S$.
Then an~equation $J(x)$ is {\em consistent} if there is at least one element $x_{0} \!\in\! S$,
so-called {\em the solution}, such~that $J(x_{0})$ is true. A formula $x=\phi(t)$, where $\phi: S
\rightarrow S$ is a given function, represents \mbox{\em the general solution} \cite{Bankovic11}
of the equation~$J(x)$ if and only if
$$
(\forall t) J(\phi(t)) \wedge (\forall x)( J(x) \Longrightarrow (\exists t) x=\phi(t) ).
$$
Let us cite the definition of reproductive equations according by {\sc S.B.~Pre\v si\'c}~\cite{Presic68}.

\begin{definition} \label{D1:D13}
\textit{The reproductive equations} are the equations of the following form:
$$
x=\varphi(x),
$$
where $x$ is a unknown, $S$ is a given set and  $\varphi:S \longrightarrow S$
is a given function which satisfies the following condition:
\begin{equation}
\label{UR}
\varphi\circ\varphi=\varphi.
\end{equation}
\end{definition}

\break

\noindent
The condition \eqref{UR} is called \textit{the condition of reproductivity} \cite{Presic68}.
The fundamental properties of reproductive equations are given by the following two statements
({\sc S.B. Pre\v si\'c} \cite{Presic68}) (see also \cite{Presic72}, \cite{Bozic75} and
\cite{Presic00}).

\begin{theorem} \label{T1:T11}
For any consistent equation $J(x)$ there is an equation of the form $x=\varphi(x)$, which is equivalent to $J(x)$ being in the same time reproductive as well.
\end{theorem}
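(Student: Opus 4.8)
The plan is to build the desired $\varphi$ directly from the hypothesis of consistency. Because $J(x)$ is consistent, I may choose and fix one solution $x_0 \in S$, so that $J(x_0)$ is true. I would then define a total function $\varphi : S \to S$ by the two-branch rule $\varphi(x) = x$ when $J(x)$ holds and $\varphi(x) = x_0$ when $J(x)$ does not hold. The two branches partition $S$ and each returns an element of $S$, so $\varphi$ is well defined. Intuitively, $\varphi$ acts as the identity on the set of solutions and collapses every non-solution to the fixed witness $x_0$.

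Next I would establish the equivalence of $x = \varphi(x)$ with $J(x)$. In the forward direction, if $J(x)$ holds then the first branch gives $\varphi(x) = x$, so $x$ solves $x = \varphi(x)$. For the converse, suppose $x = \varphi(x)$; if $J(x)$ failed, the second branch would force $x = \varphi(x) = x_0$, and since $J(x_0)$ is true this would make $J(x)$ true, a contradiction. Hence the fixed points of $\varphi$ are precisely the solutions of $J$, as required for equivalence.

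It then remains to confirm the reproductivity condition $\varphi \circ \varphi = \varphi$, which I would verify by cases on an arbitrary $x \in S$. If $J(x)$ holds, then $\varphi(x) = x$ and a second application of $\varphi$ leaves it unchanged. If $J(x)$ fails, then $\varphi(x) = x_0$, and because $J(x_0)$ is true the first branch yields $\varphi(x_0) = x_0$, so that $\varphi(\varphi(x)) = x_0 = \varphi(x)$. In both cases $\varphi(\varphi(x)) = \varphi(x)$, giving \eqref{UR}.

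The construction is short, so there is no real computational obstacle; the one point I would emphasize is the indispensable role of consistency. The default value $x_0$ must itself satisfy $J$, since it is exactly this fact that simultaneously keeps the collapse of non-solutions inside the solution set (securing the equivalence) and makes $x_0$ a fixed point of $\varphi$ (securing idempotency). Consistency is precisely what guarantees that such an $x_0$ exists, and this is the single place where the hypothesis is genuinely used.
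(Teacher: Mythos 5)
Your proof is correct. The paper itself does not prove Theorem \ref{T1:T11}; it is quoted from {\sc S.B. Pre\v si\'c}'s 1968 paper, and your two-branch construction --- fix a solution $x_{0}$, set $\varphi(x)=x$ when $J(x)$ holds and $\varphi(x)=x_{0}$ otherwise, then check the fixed-point equivalence and idempotency by cases --- is exactly the classical argument behind that cited result, with the role of consistency (existence of $x_{0}$) correctly identified as the only place the hypothesis is used.
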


\begin{theorem}\label{T1:T12}
If a certain equation $J(x)$ is equivalent to the reproductive one \mbox{$x\!\,=\!\,\varphi(x)$}, the
general solution is given by the formula $x=\varphi(y)$, for any value $y\in S$.
\end{theorem}

\noindent
Let us remark that a formula $x=\phi(t)$, where $\phi: S \rightarrow S$ is a given function, represents
\mbox{\em the reproductive general solution} \cite{Bankovic11} of the equation~$J(x)$
if and only if
$$
(\forall t) J(\phi(t)) \wedge (\forall t)( J(t) \Longrightarrow t = \phi(t) ).
$$

\noindent
Reproductivity of some equations of mathematical analysis was studied by {\sc J.D. Ke\v cki\' c}
in \cite{Keckic82}, \cite{Keckic83}. In \cite{KeckicPresic97} {\sc J.D. Ke\v cki\' c} and
{\sc S.B. Pre\v si\' c} considered the general applications of the concept of reproductivity.
The general applications of the concept of reproductivity in various mathematical
structures can also be found in \cite{Rudeanu78}, \cite{Bankovic79}, \cite{Bankovic95},
\cite{Bankovic96}, \cite{Keckic97} and~\cite{Presic00}.

\section{Main results}

\noindent
Let $m, n \in \N$ and $\mathbb{C}$ is the field of complex numbers. The set of all $m \times n$
matrices over $\mathbb{C}$ is denoted by $\mathbb{C}^{m\times n}$. By $\mathbb{C}_{a}^{m \times
n}$ we denote the set of all matrices from $\mathbb{C}^{m\times n}$ with a rank $a$. For
$A \!\in\!\mathbb{C}^{m\times n}$ the rank of $A$ is denoted by $rank(A).$ The unit matrix of
order $m$ is denoted by $I_{m}$ (if the dimension of unit matrix is known from the context, we
omit the index which indicates the dimension and we use designation $I$). Let $A \in
\mathbb{C}^{m \times n}$, then a solution of the matrix equation
\begin{equation*}
AXA=A
\end{equation*}
is called \textit{$\lbrace 1 \rbrace$-inverse} of $A$ and it is denoted by $ A^{(1)}.$
In the general case $\lbrace 1 \rbrace$-inverse of $A$ is not uniquely determined. The set of
all $\lbrace 1 \rbrace$-inverses of $A$ is denoted by $A\lbrace 1\rbrace$. It can be shown that
$A\lbrace 1\rbrace$ is not empty. $\lbrace 1 \rbrace$-inverse of $A$ is uniquely determined if
$A$ is regular. In that case $\lbrace 1\rbrace$-inverse $A^{(1)}$ corresponds to $A^{-1}$ i.e.
$A\lbrace 1\rbrace=\lbrace A^{-1}\rbrace$. There are also other types of inverses. More
informations about $\lbrace 1\rbrace$-inverse and other types of inverses can be found in
\cite{Ben-IsraelGreville03} and \cite{CampbellMeyer09}. For $A \!\in\!\mathbb{C}^{m\times m}$
the smallest non-negative integer $k$ such that $rank(A^{k})\!=\!rank(A^{k+1})$ is called the
index of $A$ and it is denoted~by~$Ind(A).$

\break

\medskip
\noindent
This section of paper is divided into \textit{three} parts. The \textit{first} part is devoted to
the matrix equation
\begin{equation}
\label{kj}
A^{m}XB^{n}=C,
\end{equation}where $A\!\in\!\mathbb{C}^{p \times p},$ $B\!\in\!\mathbb{C}^{q\times q},$
$C\!\in \!\mathbb{C}^{p \times q},$ $m\geq k=ind(A)$ and $n\geq l=ind(B)$.
In the \textit{second} part we consider the matrix system
\begin{equation}
\label{gps}
\hspace*{3 mm} (2.2.1.) \quad A^{m}X=B \quad \quad \wedge \quad \quad (2.2.2.) \quad XD^{n}=E,
\end{equation}
where $A\!\in \!\mathbb{C}^{p\times p}$, $B\!\in\! \mathbb{C}^{p \times q}$,
$D \!\in\!\mathbb{C}^{q\times q}$, $E \!\in\!\mathbb{C}^{p \times q}$, $m\geq Ind(A)$ and
$n\!\geq\!Ind(D)$. A solution of the matrix system
\begin{equation}
\label{kki}
AXA=A \quad \wedge \quad A^{k}X=XA^{k},
\end{equation} where $A\!\in \!\mathbb{C}^{p\times p}$ is a singular matrix and $k\in\N$, is
analysed in the \textit{third} part of this section.

\medskip
{\large \textbf{2.1.}} In this part we analyse the matrix equation \eqref{kj}. In the paper
\cite{Cline68}~\mbox{\sc R.E. Cline} was the first one who considered the matrix equation
\eqref{kj}. Using {\sc Penrose}'s condition for the consistence of the matrix equation $AXB=C$,
{\sc R.E. Cline} concluded that the matrix equation \eqref{kj} is consistent if and only if
\begin{equation}
\label{ukkj}
A^{m}(A^{m})^{(1)}C(B^{n})^{(1)}B^{n}=C.
\end{equation}
In the paper \cite{Cline68} it was shown that the matrix equation \eqref{kj} is consistent for
any $m>k$ and any $n>l$ if and only if the matrix equation $A^{k}XB^{l}=C$ is consistent. Based
on the results in the paper \cite{RadicicMalesevic} the condition of consistence \eqref{ukkj} for
the matrix equation \eqref{kj} can be also considered in a new form (see Theorem 2.1).

\begin{lemma}\label{L2:L21}
If the matrix equation \eqref{kj} is consistent, the equivalence
\begin{equation*}
A^{m}XB^{n}=C
\;\Longleftrightarrow\;
X=f(X)=X-(A^{m})^{(1)}(A^{m}XB^{n}-C)(B^{n})^{(1)}
\end{equation*}
is true.
\end{lemma}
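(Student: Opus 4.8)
The plan is to prove the two implications of the equivalence separately, reading the right-hand side as the fixed-point condition $X=f(X)$ for the affine map $f$. The forward implication is immediate and needs no hypothesis: if $A^{m}XB^{n}=C$, then $A^{m}XB^{n}-C$ is the zero matrix, so the correction term $(A^{m})^{(1)}(A^{m}XB^{n}-C)(B^{n})^{(1)}$ vanishes and hence $f(X)=X$.

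For the converse, I would start from $X=f(X)$ and subtract $X$ from both sides, which is equivalent to the identity $(A^{m})^{(1)}(A^{m}XB^{n}-C)(B^{n})^{(1)}=0$. The key manipulation is to multiply this identity on the left by $A^{m}$ and on the right by $B^{n}$, obtaining $A^{m}(A^{m})^{(1)}(A^{m}XB^{n}-C)(B^{n})^{(1)}B^{n}=0$. I would then distribute over the difference and simplify each factor using the defining relations of a $\{1\}$-inverse, namely $A^{m}(A^{m})^{(1)}A^{m}=A^{m}$ and $B^{n}(B^{n})^{(1)}B^{n}=B^{n}$; this collapses the term containing $X$ to exactly $A^{m}XB^{n}$.

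The remaining term $A^{m}(A^{m})^{(1)}C(B^{n})^{(1)}B^{n}$ is precisely the left-hand side of the consistency condition \eqref{ukkj}, which equals $C$ by the standing assumption that \eqref{kj} is consistent. Combining these two simplifications yields $A^{m}XB^{n}-C=0$, i.e.\ $A^{m}XB^{n}=C$, which closes the converse.

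The only genuinely delicate point — and the reason the consistency hypothesis is indispensable — lies in this converse direction: one cannot simply cancel the outer factors $(A^{m})^{(1)}$ and $(B^{n})^{(1)}$ to pass from $(A^{m})^{(1)}(A^{m}XB^{n}-C)(B^{n})^{(1)}=0$ to $A^{m}XB^{n}-C=0$, since these $\{1\}$-inverses need not be left- or right-cancellable. The device of multiplying back by $A^{m}$ and $B^{n}$, combined with the idempotency encoded in the $\{1\}$-inverse relations and the consistency identity \eqref{ukkj}, is exactly what bridges this gap.
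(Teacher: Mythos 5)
Your proof is correct and takes essentially the same route as the paper: the forward direction is immediate because the correction term vanishes, and the converse works by multiplying through by $A^{m}$ on the left and $B^{n}$ on the right, then simplifying with the $\{1\}$-inverse identities $A^{m}(A^{m})^{(1)}A^{m}=A^{m}$, $B^{n}(B^{n})^{(1)}B^{n}=B^{n}$ and the consistency condition \eqref{ukkj}. The only cosmetic difference is that you first subtract $X$ to isolate the identity $(A^{m})^{(1)}(A^{m}XB^{n}-C)(B^{n})^{(1)}=0$, whereas the paper expands $A^{m}f(X)B^{n}$ directly; the underlying algebra is identical.
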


\noindent
\begin{proof} $\Longrightarrow):$ Suppose that $A^{m}XB^{n}=C$. Then, the equality
$$(A^{m})^{(1)}A^{m}XB^{n}(B^{n})^{(1)}=(A^{m})^{(1)}C(B^{n})^{(1)}$$is also true and

$$
\begin{array}{rcl}
X&\!\!\!=\!\!\!&X-(A^{m})^{(1)}A^{m}XB^{n}(B^{n})^{(1)}+(A^{m})^{(1)}C(B^{n})^{(1)}

\medskip                                                              \\[1.0 ex]
&\!\!\!=\!\!\!&
X-(A^{m})^{(1)}(A^{m}XB^{n}-C)(B^{n})^{(1)}

\medskip                                                              \\[1.0 ex]
&\!\!\!=\!\!\!&
f(X)
\end{array}
$$

\smallskip
\noindent
$\Longleftarrow):$ Suppose that $X=f(X)=X-(A^{m})^{(1)}(A^{m}XB^{n}-C)(B^{n})^{(1)}$.
Then,
$$
\begin{array}{rcl}
A^{m}XB^{n}&\!\!\!=\!\!\!&A^{m}f(X)B^{n}

\medskip                                                              \\[1.0 ex]
&\!\!\!=\!\!\!&
A^{m}\big{(}X-(A^{m})^{(1)}(A^{m}XB^{n}-C)(B^{n})^{(1)}\big{)}B^{n}

\medskip                                                              \\[1.0 ex]
&\!\!\!=\!\!\!&
A^{m}XB^{n}-
\mathop{\underbrace{A^{m}(A^{m})^{(1)}A^{m}}}\limits_{(=A^{m})}X
\mathop{\underbrace{B^{n}(B^{n})^{(1)}B^{n}}}\limits_{(=B^{n})}
+\mathop{\underbrace{A^{m}(A^{m})^{(1)}C(B^{n})^{(1)}B^{n}}}
\limits_{(\mathop{=} \limits_{\eqref{ukkj}}\!C)}

\medskip                                                              \\[1.0 ex]
&\!\!\!=\!\!\!&
A^{m}XB^{n}-A^{m}XB^{n}+C

\medskip                                                              \\[1.0 ex]
&\!\!\!=\!\!\!&
C.
\end{array}
$$

\vspace*{-4.0 mm}

\end{proof}

\begin{remark} \label{R2:R21}
It is easy to show that $f^{2}(Y)=f(Y)$ i.e. the function $f$ satisfies the condition of
reproductivity. Therefore, if the matrix equation \eqref{kj} is consistent, it is equivalent to
the reproductive matrix equation $X\!=\!f(X)$.
\end{remark}

\noindent
Based on the previous remark and Theorem \ref{T1:T12} we conclude that the following theorem
is true.

\begin{theorem}\label{T2:T21}
If the matrix equation \eqref{kj} is consistent, the general solution of the matrix equation
\eqref{kj} is given by the formula
\begin{equation*}
X=f(Y)=(A^{m})^{(1)}C(B^{n})^{(1)}+Y-(A^{m})^{(1)}A^{m}YB^{n}(B^{n})^{(1)},
\end{equation*}
where $Y$ is an arbitrary matrix corresponding dimensions.
\end{theorem}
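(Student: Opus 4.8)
The plan is to show that the claimed formula is exactly $f(Y)$ for the reproductive function $f$ from Lemma~\ref{L2:L21}, and then invoke Theorem~\ref{T1:T12}. First I would verify the algebraic identity
\begin{equation*}
f(Y)=Y-(A^{m})^{(1)}(A^{m}YB^{n}-C)(B^{n})^{(1)}=(A^{m})^{(1)}C(B^{n})^{(1)}+Y-(A^{m})^{(1)}A^{m}YB^{n}(B^{n})^{(1)},
\end{equation*}
which is nothing more than distributing $(A^{m})^{(1)}$ and $(B^{n})^{(1)}$ over the difference $A^{m}YB^{n}-C$ and rearranging the three resulting terms. So the formula in the statement and the function $f$ coincide as maps on matrices of the appropriate dimensions.

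Next I would assemble the hypotheses. By assumption the matrix equation \eqref{kj} is consistent, so by Lemma~\ref{L2:L21} it is equivalent to the fixed-point equation $X=f(X)$, and by Remark~\ref{R2:R21} the function $f$ satisfies the condition of reproductivity \eqref{UR}, i.e.\ $f\circ f=f$. Thus $X=f(X)$ is a reproductive equation in the sense of Definition~\ref{D1:D13}, and it is equivalent to the original equation \eqref{kj}.

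Finally I would apply Theorem~\ref{T1:T12}: since \eqref{kj} is equivalent to the reproductive equation $X=f(X)$, its general solution is given by $X=f(Y)$ for an arbitrary matrix $Y$ of the corresponding dimensions. Substituting the expression for $f(Y)$ established in the first step yields precisely the stated formula, completing the proof.

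I do not anticipate a genuine obstacle here, since the substantive work has already been carried out in the lemma and remark; the only points requiring care are the bookkeeping of matrix dimensions (so that all the products $A^{m}YB^{n}$, $(A^{m})^{(1)}C(B^{n})^{(1)}$, etc.\ are well defined and $Y$ ranges over the correct space $\mathbb{C}^{p\times q}$) and the explicit observation that the two written forms of $f(Y)$ agree. The theorem is essentially a direct corollary, so the proof reduces to citing Remark~\ref{R2:R21} and Theorem~\ref{T1:T12} together with the trivial rewriting of $f(Y)$.
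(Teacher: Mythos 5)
Your proposal is correct and follows exactly the paper's own route: the paper derives Theorem~\ref{T2:T21} by combining Lemma~\ref{L2:L21} (equivalence with the fixed-point equation $X=f(X)$ under consistency), Remark~\ref{R2:R21} (reproductivity $f\circ f=f$), and Theorem~\ref{T1:T12}, with the stated formula being just the expanded form of $f(Y)$. The only difference is that you spell out the trivial rewriting of $f(Y)$ explicitly, which the paper leaves implicit.
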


\noindent
The following theorem is an extension of the previous theorem.

\begin{theorem}\label{T2:T22}
If $X_{0}$ is a particular solution of the matrix equation \eqref{kj}, the general solution of
the matrix equation \eqref{kj} is given by the formula
\begin{equation}
\label{oorkj}
X=g(Y)=X_{0}+Y-(A^{m})^{(1)}A^{m}YB^{n}(B^{n})^{(1)},
\end{equation}
where $Y$ is an arbitrary matrix corresponding dimensions.
\end{theorem}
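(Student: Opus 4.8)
The plan is to verify directly the two defining conditions of a general solution from the Introduction, namely that $(\forall Y)\,J(g(Y))$ holds and that $(\forall X)\big(J(X)\Rightarrow(\exists Y)\,X=g(Y)\big)$, where $J(X)$ abbreviates the predicate $A^{m}XB^{n}=C$. Unlike the situation in Theorem \ref{T2:T21}, I do not expect $g$ itself to be reproductive (one checks $g(g(Y))=g(Y)+X_{0}-(A^{m})^{(1)}C(B^{n})^{(1)}$, which generally differs from $g(Y)$ when $X_{0}\neq(A^{m})^{(1)}C(B^{n})^{(1)}$), so Theorem \ref{T1:T12} cannot be invoked verbatim; the completeness half will instead require exhibiting a suitable $Y$ explicitly.

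For the first condition I would substitute $g(Y)$ from \eqref{oorkj} into the left-hand side and compute
$$A^{m}g(Y)B^{n}=A^{m}X_{0}B^{n}+A^{m}YB^{n}-A^{m}(A^{m})^{(1)}A^{m}YB^{n}(B^{n})^{(1)}B^{n}.$$
Here the standard $\{1\}$-inverse identities $A^{m}(A^{m})^{(1)}A^{m}=A^{m}$ and $B^{n}(B^{n})^{(1)}B^{n}=B^{n}$ collapse the last term to $A^{m}YB^{n}$, while $A^{m}X_{0}B^{n}=C$ because $X_{0}$ is a particular solution. Hence $A^{m}g(Y)B^{n}=C+A^{m}YB^{n}-A^{m}YB^{n}=C$, so $g(Y)$ solves \eqref{kj} for every $Y$.

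For the second condition, let $X$ be an arbitrary solution of \eqref{kj} and put $Y:=X-X_{0}$. The key observation is that $A^{m}YB^{n}=A^{m}XB^{n}-A^{m}X_{0}B^{n}=C-C=0$, so the correction term vanishes: $(A^{m})^{(1)}A^{m}YB^{n}(B^{n})^{(1)}=0$. Substituting into \eqref{oorkj} gives $g(Y)=X_{0}+Y=X_{0}+(X-X_{0})=X$, which furnishes the required value of $Y$.

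The only delicate point, and the step I would flag as the real content, is the completeness direction: because $g$ fails the condition of reproductivity \eqref{UR}, one cannot read off surjectivity onto the solution set from an idempotency identity as was done in Theorem \ref{T2:T21}. The remedy is precisely the explicit choice $Y=X-X_{0}$, whose validity rests entirely on the fact that the difference of two solutions lies in the kernel of the map $Z\mapsto A^{m}ZB^{n}$; once that is noticed the computation is routine. Alternatively, I could deduce the result from Theorem \ref{T2:T21} by observing that $X_{0}$ and $(A^{m})^{(1)}C(B^{n})^{(1)}$ differ by such a kernel element, so that the two affine families $\{g(Y):Y\}$ and $\{f(Y):Y\}$ coincide; but the direct verification above is shorter and self-contained.
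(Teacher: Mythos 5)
Your proposal is correct and takes essentially the same approach as the paper: both arguments verify that $g(Y)$ solves \eqref{kj} for every $Y$ and then establish completeness by choosing $Y=X-X_{0}$ for an arbitrary solution $X$. The only cosmetic difference is that you observe the correction term $(A^{m})^{(1)}A^{m}YB^{n}(B^{n})^{(1)}$ vanishes outright because $A^{m}(X-X_{0})B^{n}=0$, whereas the paper carries that (zero) term along symbolically so that the expression visibly matches the form \eqref{oorkj}; your remark on the failure of reproductivity of $g$ likewise agrees with the paper's Remark \ref{R2:R22}.
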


\break

\noindent
\begin{proof} It is easy to see that the solution of the matrix equation \eqref{kj} is
given by \eqref{oorkj}. On the contrary, let $X$ be any solution of the matrix equation
\eqref{kj}, then
\begin{eqnarray*}
X&\!\!\!=\!\!\! &X-(A^{m})^{(1)}C(B^{n})^{(1)}+(A^{m})^{(1)}C(B^{n})^{(1)}\\
 &\!\!\!=\!\!\! &X-(A^{m})^{(1)}A^{m}XB^{n}(B^{n})^{(1)}+(A^{m})^{(1)}A^{m}X_{0}B^{n}
 (B^{n})^{(1)}\\
 &\!\!\!=\!\!\! &X-(A^{m})^{(1)}A^{m}(X-X_{0})B^{n}(B^{n})^{(1)}\\
 &\!\!\!=\!\!\! &X_{0}+(X-X_{0})-(A^{m})^{(1)}A^{m}(X-X_{0})B^{n}(B^{n})^{(1)}\\
 &\!\!\!=\!\!\! &X_{0}+Y-(A^{m})^{(1)}A^{m}YB^{n}(B^{n})^{(1)}\\
 &\!\!\!=\!\!\! &g(Y),
\end{eqnarray*}
where  $Y=X-X_{0}$. From this we see that every solution $X$ of the matrix equation \eqref{kj}
can be represented in the form \eqref{oorkj}.
\end{proof}

\noindent
\begin{remark} \label{R2:R22}
From $g^{2}(Y)=g(Y)+\mbox{\big(}X_{0}-(A^{m})^{(1)}C(B^{n})^{(1)}
\mbox{\big)}$ we conclude that the function $g$ is reproductive if and only if
$X_{0}=(A^{m})^{(1)}C(B^{n})^{(1)}.$
\end{remark}

\noindent
\begin{remark} \label{R2:R23}
Theorem \ref{T2:T22} is an extension, as we mentioned, of Theorem \ref{T2:T21} because there is a
matrix equation \eqref{kj} and a particular solution $X_{0}$ such that
$X_{0} \neq (A^{m})^{(1)}C(B^{n})^{(1)}$ for any choice of $\lbrace 1\rbrace$-inverses
$(A^{m})^{(1)}$ and $(B^{n})^{(1)}$ similar to the corresponding example from
\cite{MalesevicRadicic}.
\end{remark}

\bigskip
{\large \textbf{2.2.}} In this part we analyse the matrix system \eqref{gps}, as a special extension of
{\sc Penrose}'s matrix system \cite{Penrose55}:
\begin{equation*}
\quad AX=B \quad \quad \wedge \quad \quad XD=E,
\end{equation*}
using the concept of reproductivity.

\medskip
\noindent
Based on the result from \cite{Penrose55} we conclude that one common solution of the matrix
system \eqref{gps} is given by
$$
X_{1}=(A^{m})^{(1)}B+E(D^{n})^{(1)}-(A^{m})^{(1)}(A^{m})AE(D^{n})^{(1)}.
$$
The results which follow are extensions of the results from \cite{Ben-IsraelGreville03}
(pp. 54-55) and \cite{RadicicMalesevic}.

\noindent
\begin{lemma}\label{L2:L22}
The matrix equations \mbox{\rm (2.2.1.)} and \mbox{\rm (2.2.2.)} have a common solution if and
only if each equation separately has a solution and $$A^{m}E=BD^{n}.$$
\end{lemma}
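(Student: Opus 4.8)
The plan is to establish the equivalence by proving the two implications separately, with the reverse direction carrying all the content. For the forward (necessity) direction, suppose $X$ is a common solution of \mbox{\rm (2.2.1.)} and \mbox{\rm (2.2.2.)}, so that $A^{m}X=B$ and $XD^{n}=E$. Then $X$ is in particular a solution of each equation, so each is solvable; moreover, multiplying $A^{m}X=B$ on the right by $D^{n}$ and $XD^{n}=E$ on the left by $A^{m}$ gives $A^{m}XD^{n}=BD^{n}$ and $A^{m}XD^{n}=A^{m}E$, whence $A^{m}E=BD^{n}$. This direction is immediate and requires none of the $\{1\}$-inverse machinery.

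For the reverse (sufficiency) direction I would exhibit an explicit common solution and verify it by direct substitution, which is exactly the matrix $X_{1}$ already displayed above. The solvability of \mbox{\rm (2.2.1.)} and \mbox{\rm (2.2.2.)} translates, via {\sc Penrose}'s consistency criterion, into the identities $A^{m}(A^{m})^{(1)}B=B$ and $E(D^{n})^{(1)}D^{n}=E$, which hold for any fixed choice of $\{1\}$-inverses $(A^{m})^{(1)}$ and $(D^{n})^{(1)}$. Substituting $X_{1}$ into the first equation and repeatedly applying $A^{m}(A^{m})^{(1)}A^{m}=A^{m}$ together with $A^{m}(A^{m})^{(1)}B=B$ collapses $A^{m}X_{1}$ to $B$, the two terms carrying the factor $E(D^{n})^{(1)}$ cancelling against each other.

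The step I expect to be the main obstacle is verifying the second equation $X_{1}D^{n}=E$, since this is where all three hypotheses must be combined. Here I would use $E(D^{n})^{(1)}D^{n}=E$ to simplify the middle term and, crucially, invoke the compatibility condition $A^{m}E=BD^{n}$ to rewrite $(A^{m})^{(1)}BD^{n}$ as $(A^{m})^{(1)}A^{m}E$; this term then cancels against the contribution of the last summand of $X_{1}$, leaving exactly $E$. The delicate point is the bookkeeping: one must bracket the products correctly and apply each $\{1\}$-inverse identity only to the precise factor it governs, so that every intermediate expression is genuinely of the form to which a consistency or $\{1\}$-inverse relation applies. Once both verifications succeed, $X_{1}$ is the required common solution and the equivalence is proved.
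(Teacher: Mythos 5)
Your proposal is correct and follows essentially the same route as the paper, which simply defers to the classical argument in Ben-Israel--Greville (pp.~54--55): necessity by multiplying the two equations to get $A^{m}XD^{n}=BD^{n}=A^{m}E$, and sufficiency by verifying that the displayed matrix $X_{1}=(A^{m})^{(1)}B+E(D^{n})^{(1)}-(A^{m})^{(1)}A^{m}E(D^{n})^{(1)}$ is a common solution using Penrose's consistency identities and $A^{m}E=BD^{n}$. Your bookkeeping in both verifications is exactly right, including the cancellation of the $(A^{m})^{(1)}A^{m}E$ terms in the check of \mbox{\rm (2.2.2.)}.
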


\begin{proof}
The proof is similar to the proof in \cite{Ben-IsraelGreville03}.
\end{proof}

\noindent
\begin{lemma}\label{L2:L23}
If the matrix system \eqref{gps} is consistent, the equivalence
\begin{equation*}
\begin{array}{l}
(A^{m}X=B \;\; \wedge \;\; XD^{n}=E ) \;\; \Longleftrightarrow \;\;  \\[2.0 ex]
X=f(X)=X_{1}+(I-(A^{m})^{(1)}A^{m})X(I-D^{n}(D^{n})^{(1)})
\end{array}
\end{equation*}
 is true.
\end{lemma}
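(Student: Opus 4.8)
The plan is to follow the template of Lemma \ref{L2:L21}, exploiting the fact that $f(X)=X_{1}+(I-(A^{m})^{(1)}A^{m})X(I-D^{n}(D^{n})^{(1)})$ is the sum of a fixed matrix $X_{1}$ and a ``free'' term sandwiched between the two idempotents $P=(A^{m})^{(1)}A^{m}$ and $Q=D^{n}(D^{n})^{(1)}$. First I would record the defining $\lbrace 1\rbrace$-inverse identities $A^{m}(A^{m})^{(1)}A^{m}=A^{m}$ and $D^{n}(D^{n})^{(1)}D^{n}=D^{n}$, which immediately give the two annihilation relations $A^{m}(I-P)=0$ and $(I-Q)D^{n}=0$. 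From the consistency hypothesis together with Lemma \ref{L2:L22} I would extract the three facts that drive every cancellation below: the two single-equation solvability conditions $A^{m}(A^{m})^{(1)}B=B$ and $E(D^{n})^{(1)}D^{n}=E$, and the compatibility condition $A^{m}E=BD^{n}$.

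For the implication $(\Longrightarrow)$ I would assume that $X$ is a common solution, so $A^{m}X=B$ and $XD^{n}=E$. Expanding the free term as $X-PX-XQ+PXQ$ and substituting $A^{m}X=B$ into $PX$, $XD^{n}=E$ into $XQ$, and either relation into $PXQ$, each $X$-dependent piece is replaced by the data $B$ and $E$; collecting terms then shows that the free part reduces to $X-X_{1}$, so that $f(X)=X$.

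For the implication $(\Longleftarrow)$ I would assume $X=f(X)$ and test the two equations separately. Multiplying on the left by $A^{m}$ and using $A^{m}(I-P)=0$ kills the free term, leaving $A^{m}X=A^{m}X_{1}$; a short computation with $A^{m}(A^{m})^{(1)}A^{m}=A^{m}$ and the solvability condition $A^{m}(A^{m})^{(1)}B=B$ then gives $A^{m}X_{1}=B$. Symmetrically, multiplying on the right by $D^{n}$ and using $(I-Q)D^{n}=0$ leaves $XD^{n}=X_{1}D^{n}$, and here the compatibility $A^{m}E=BD^{n}$ converts the mixed term so that the solvability condition $E(D^{n})^{(1)}D^{n}=E$ collapses the remainder to $E$.

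The delicate point, and the one I would check most carefully, is that the backward direction genuinely needs all three consequences of Lemma \ref{L2:L22} at once: the two annihilation relations only dispose of the free term, whereas recovering $B$ and $E$ from $X_{1}$ forces the use of both solvability conditions and, crucially, of the compatibility $A^{m}E=BD^{n}$ that interlocks the two halves of $X_{1}$. Once these are in place the remaining work is routine bookkeeping of $\lbrace 1\rbrace$-inverse identities, entirely parallel to the proof of Lemma \ref{L2:L21}.
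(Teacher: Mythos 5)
Your proof is correct and follows essentially the route the paper intends: its own ``proof'' merely defers to \cite{RadicicMalesevic}, and the argument there is exactly the template you use (and the one written out for Lemma \ref{L2:L21}) --- forward direction by substituting $A^{m}X=B$ and $XD^{n}=E$ into the sandwiched term so that it collapses to $X-X_{1}$, backward direction by multiplying by $A^{m}$ on the left and $D^{n}$ on the right and invoking the three consequences of Lemma \ref{L2:L22}. One remark: your computation (rightly) takes $X_{1}=(A^{m})^{(1)}B+E(D^{n})^{(1)}-(A^{m})^{(1)}A^{m}E(D^{n})^{(1)}$, i.e.\ the Penrose form; the stray factor $A$ in the paper's displayed formula for $X_{1}$ is a typo, and with it neither direction of the equivalence would go through.
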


\noindent
\begin{proof}
The proof is similar to the proof in \cite{RadicicMalesevic}.
\end{proof}

\noindent
\begin{remark}\label{R2:24}
It is easy to show that $ f^{2}(Y)=f(Y)$ i.e. the function $f$ satisfies the condition of
reproductivity. Therefore, if the matrix system \eqref{gps} is consistent, it is equivalent to
the reproductive matrix equation $X\!=\!f(X)$.
\end{remark}

\noindent
Based on the previous remark and Theorem \ref{T1:T12} we conclude that the following theorem is
true.

\noindent
\begin{theorem}\label{T2:T23}
If the matrix system \eqref{gps} is consistent, the general solution of the matrix system
\mbox{\rm (\ref{gps})} is given by the formula
\begin{equation*}
X=f(Y)=X_{1}+(I-(A^{m})^{(1)}A^{m})Y(I-D^{n}(D^{n})^{(1)}),
\end{equation*}
where $Y$ is an arbitrary matrix corresponding dimensions.
\end{theorem}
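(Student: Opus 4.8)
The plan is to derive the statement directly from the reproductive machinery already assembled for this system, so that essentially no new computation is required. By Lemma~\ref{L2:L23}, the consistency hypothesis makes the system $(A^{m}X=B \,\wedge\, XD^{n}=E)$ equivalent to the single matrix equation $X=f(X)$, where $f(X)=X_{1}+(I-(A^{m})^{(1)}A^{m})X(I-D^{n}(D^{n})^{(1)})$; and by Remark~\ref{R2:24} the map $f$ satisfies the condition of reproductivity $f\circ f=f$. With these two facts in hand, Theorem~\ref{T1:T12} applies word for word: a consistent equation equivalent to a reproductive equation $X=\varphi(X)$ has $X=\varphi(Y)$, with $Y$ arbitrary, as its general solution. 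Specializing $\varphi=f$ gives exactly the asserted formula, so the proof is little more than an invocation of these three results.

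To present the argument in self-contained form I would check the two defining clauses of a general solution directly. For the first clause I would show that $f(Y)$ solves the system for every $Y$. Writing $P=I-(A^{m})^{(1)}A^{m}$ and $Q=I-D^{n}(D^{n})^{(1)}$, the $\{1\}$-inverse identities $A^{m}(A^{m})^{(1)}A^{m}=A^{m}$ and $D^{n}(D^{n})^{(1)}D^{n}=D^{n}$ give $A^{m}P=0$ and $QD^{n}=0$, whence $A^{m}f(Y)=A^{m}X_{1}$ and $f(Y)D^{n}=X_{1}D^{n}$; since $X_{1}$ is a common solution (guaranteed by consistency together with Lemma~\ref{L2:L22}), these equal $B$ and $E$ respectively. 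For the second clause I would show that every solution lies in the image of $f$: if $X$ solves the system, then the equivalence in Lemma~\ref{L2:L23} forces $X=f(X)$, so the choice $Y=X$ already exhibits $X=f(Y)$.

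The one genuinely computational step, and hence the main obstacle, is concealed in Remark~\ref{R2:24}: the verification that $f$ is idempotent. Since $P$ and $Q$ are themselves idempotent (because $(A^{m})^{(1)}A^{m}$ and $D^{n}(D^{n})^{(1)}$ are idempotent by the $\{1\}$-inverse identity), one computes $f(f(Y))=X_{1}+PX_{1}Q+P^{2}YQ^{2}=X_{1}+PX_{1}Q+PYQ$, so reproductivity reduces to the single cancellation $PX_{1}Q=0$. I expect this to be the delicate point: using $A^{m}X_{1}=B$, $X_{1}D^{n}=E$, the compatibility relation $A^{m}E=BD^{n}$ from Lemma~\ref{L2:L22}, and the explicit form of $X_{1}$, the expression $PX_{1}Q$ collapses term by term to zero. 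Once this idempotency and the equivalence of Lemma~\ref{L2:L23} are in place, Theorem~\ref{T1:T12} closes the argument with no further work.
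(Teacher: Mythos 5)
Your proposal is correct and takes essentially the same approach as the paper: the paper likewise obtains Theorem \ref{T2:T23} by combining the equivalence of Lemma \ref{L2:L23} with the reproductivity of $f$ noted in Remark \ref{R2:24}, and then invoking Pre\v si\'c's Theorem \ref{T1:T12}. Your additional self-contained verification --- including the reduction of $f\circ f=f$ to the cancellation $PX_{1}Q=0$, which indeed holds under consistency since $P(A^{m})^{(1)}A^{m}=0$ and $E(D^{n})^{(1)}Q=0$ once $B$ and $E$ are expressible as $A^{m}W$ and $VD^{n}$ --- merely fills in details the paper leaves as ``easy to show.''
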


\noindent
The following theorem is an extension of the previous theorem.

\noindent
\begin{theorem}\label{T2:T24}
If $X_{0}$ is a particular solution of the matrix system \eqref{gps}, the general solution of the
matrix system \eqref{gps}) is given by the formula
\begin{equation*}
X=g(Y)=X_{0}+(I-(A^{m})^{(1)}A^{m})Y(I-D^{n}(D^{n})^{(1)}),
\end{equation*}
where $Y$ is an arbitrary matrix corresponding dimensions.
\end{theorem}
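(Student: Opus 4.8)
The plan is to follow exactly the pattern established in the proof of Theorem \ref{T2:T22}, which handled the analogous extension for the single matrix equation \eqref{kj}. The statement asserts two things: first, that $g(Y)$ is always a solution of the system \eqref{gps} for every choice of $Y$, and second, that every solution $X$ of the system arises as $g(Y)$ for some $Y$. I would prove these two directions separately, exploiting the fact that $X_{1}$ (and more generally any particular solution $X_{0}$) already satisfies both $A^{m}X_{0}=B$ and $X_{0}D^{n}=E$.

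\medskip
\noindent
For the first (easy) direction I would substitute $X=g(Y)=X_{0}+(I-(A^{m})^{(1)}A^{m})Y(I-D^{n}(D^{n})^{(1)})$ into each of the two equations (2.2.1.) and (2.2.2.) in turn. For equation (2.2.1.), multiplying on the left by $A^{m}$ annihilates the correction term, since $A^{m}(I-(A^{m})^{(1)}A^{m})=A^{m}-A^{m}(A^{m})^{(1)}A^{m}=A^{m}-A^{m}=0$ by the defining property of the $\{1\}$-inverse; what survives is $A^{m}X_{0}=B$. Symmetrically, for equation (2.2.2.), multiplying on the right by $D^{n}$ kills the correction term because $(I-D^{n}(D^{n})^{(1)})D^{n}=D^{n}-D^{n}(D^{n})^{(1)}D^{n}=0$, leaving $X_{0}D^{n}=E$. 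Thus every $g(Y)$ solves the system.

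\medskip
\noindent
For the converse direction I would take an arbitrary solution $X$ and show it has the form $g(Y)$. Following the telescoping computation in Theorem \ref{T2:T22}, the key is to write $X$ in terms of the difference $Y:=X-X_{0}$, which satisfies the \emph{homogeneous} system $A^{m}Y=0$ and $YD^{n}=0$. From $A^{m}Y=0$ one gets $(A^{m})^{(1)}A^{m}Y=0$, and from $YD^{n}=0$ one gets $YD^{n}(D^{n})^{(1)}=0$; substituting these into the expansion of $g(Y)=X_{0}+Y-(A^{m})^{(1)}A^{m}Y-YD^{n}(D^{n})^{(1)}+(A^{m})^{(1)}A^{m}YD^{n}(D^{n})^{(1)}$ collapses every correction term to zero, yielding $g(Y)=X_{0}+Y=X$. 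Hence $X$ is represented in the required form.

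\medskip
\noindent
I do not expect a genuine obstacle here; the computation is routine once one recognizes that the two projector-like factors $I-(A^{m})^{(1)}A^{m}$ and $I-D^{n}(D^{n})^{(1)}$ are designed precisely to be annihilated by $A^{m}$ on the left and $D^{n}$ on the right, respectively. The only mild care needed is in the converse direction, where one must verify that the \emph{mixed} term $(A^{m})^{(1)}A^{m}YD^{n}(D^{n})^{(1)}$ also vanishes; this follows immediately from either $A^{m}Y=0$ or $YD^{n}=0$, so it is not a real difficulty. Note that this theorem, unlike Theorem \ref{T2:T23}, makes no reproductivity claim—consistent with Remark \ref{R2:R22}, the map $g$ built from a general particular solution $X_{0}$ need not satisfy $g\circ g=g$.
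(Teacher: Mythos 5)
Your proposal is correct and complete: both directions check out, and the key observations (that $A^{m}$ annihilates $I-(A^{m})^{(1)}A^{m}$ on the left, that $D^{n}$ annihilates $I-D^{n}(D^{n})^{(1)}$ on the right, and that with $Y=X-X_{0}$ solving the homogeneous system all correction terms vanish so $g(Y)=X_{0}+Y=X$) are exactly the right ones. The paper itself only cites an external reference for this proof, but your argument follows the same pattern as the paper's proof of Theorem \ref{T2:T22} — the same decomposition $Y=X-X_{0}$ and the same use of the defining property of the $\lbrace 1\rbrace$-inverse — so this is essentially the paper's intended approach.
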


\noindent
\begin{proof} The proof is similar to the proof in \cite{RadicicMalesevic}.\end{proof}

\noindent
\begin{remark} \label{R2:R25}
From $g^{2}(Y)=g(Y)+(X_{0}-X_{1})$ we conclude that the function $g$ is reproductive if and only if $X_{0}=X_{1}.$
\end{remark}

\break

\bigskip
{\large \textbf{2.3.}} In this part we analyse the matrix system \eqref{kki}. The second equation of the matrix system \eqref{kki}
determines {\it $\{5^{k}\}$-inverse} of $A$. A solution of the matrix system \eqref{kki} is $\{1,5^{k}\}$-inverse which
is called {\it $k$-commutative  $\{1\}$-inverse} and is denoted~by~$\bar{A}$. $k$-commutative $\{1\}$-inverses were considered
in \cite{Ruski_Gruv_84_85}, \cite{Keckic85} and \cite{Keckic97}. It is easy to check that one solution of the matrix system
\eqref{kki} is given by $\hat{X} \!=\! \bar{A}A\bar{A}$. In \cite{Keckic85} {\sc J.D. Ke\v cki\' c} gave the condition for
the consistency of the matrix system \eqref{kki}. We are going to represent the formula of the general reproductive solution
for the consistent matrix system \eqref{kki} using the concept of reproductive equations. We need the following four lemmas.

\noindent
\begin{lemma}\label{L2:L24}
$A^{k} \bar{A}^{k} = \bar{A}^{k} A^{k}$.
\end{lemma}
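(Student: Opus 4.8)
The plan is to extract from the defining system \eqref{kki} the single relevant commutation relation and then promote it to the $k$-th power by a routine induction. Since $\bar{A}$ is by definition a solution of \eqref{kki}, it satisfies in particular the second equation with $X=\bar{A}$, namely $A^{k}\bar{A}=\bar{A}A^{k}$. Thus $A^{k}$ and $\bar{A}$ commute as matrices, and the whole statement reduces to the elementary fact that if a matrix commutes with $\bar{A}$ then it commutes with every power of $\bar{A}$. Notably, the $\{1\}$-inverse condition $A\bar{A}A=A$ plays no role here; only the $\{5^{k}\}$-condition is needed.

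Concretely, I would argue by induction on $j$ that $A^{k}\bar{A}^{j}=\bar{A}^{j}A^{k}$ for all $j\geq 1$. The base case $j=1$ is exactly the commutation relation just noted. For the inductive step, assuming $A^{k}\bar{A}^{j}=\bar{A}^{j}A^{k}$, I would compute
$$
A^{k}\bar{A}^{j+1}=(A^{k}\bar{A}^{j})\bar{A}=(\bar{A}^{j}A^{k})\bar{A}=\bar{A}^{j}(A^{k}\bar{A})=\bar{A}^{j}(\bar{A}A^{k})=\bar{A}^{j+1}A^{k},
$$
using the inductive hypothesis in the second equality and the base relation $A^{k}\bar{A}=\bar{A}A^{k}$ in the fourth. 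Specializing to $j=k$ then yields $A^{k}\bar{A}^{k}=\bar{A}^{k}A^{k}$, which is precisely the claim.

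There is essentially no obstacle in this argument: the entire content of the lemma lies in recognizing that the $\{5^{k}\}$-condition $A^{k}X=XA^{k}$, applied to the particular solution $\bar{A}$, already furnishes the commutativity of $A^{k}$ with $\bar{A}$, and that the passage to the $k$-th power is purely formal. I would therefore keep the proof to the short induction above, and one could equally well phrase it as the observation that any matrix commuting with $A^{k}$ again commutes with $A^{k}$ after raising it to any power, applied here to $\bar{A}^{k}$.
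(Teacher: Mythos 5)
Your proof is correct and is essentially the paper's own argument: the paper likewise starts from $A^{k}\bar{A}=\bar{A}A^{k}$ (the $\{5^{k}\}$-condition applied to $\bar{A}$) and passes $A^{k}$ through the factors of $\bar{A}^{k}$ one at a time, writing as an iterated chain of equalities exactly the induction you spell out. Your observation that the $\{1\}$-inverse condition $A\bar{A}A=A$ is not needed here is also consistent with the paper's proof, which never invokes it.
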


\noindent
\begin{proof}
$A^{k}\bar{A}^{k}\!=\!
\mathop{\underbrace{A^{k}\bar{A}}}\limits_{(=\bar{A}A^{k})}\!\!\bar{A}^{k-1}$
$\!=\!\bar{A}A^{k}\bar{A}^{k-1}\!=
\!\bar{A}\!\!\mathop{\underbrace{A^{k}\bar{A}}}\limits_{(=\bar{A}A^{k})}\!\!\bar{A}^{k-2}\!$
$=\bar{A}^{2}A^{k}\bar{A}^{k-2}$
$=\ldots=\!\bar{A}^{k} A^{k}.$
\end{proof}

\noindent
\begin{lemma}\label{L2:L25}
$A^{k} \bar{A}^{k} A^{k} = A^{k}$.
\end{lemma}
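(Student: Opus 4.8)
The plan is to reduce the three-factor product to a pure ``shift'' computation. First I would use Lemma~\ref{L2:L24}: since $A^{k}\bar{A}^{k}=\bar{A}^{k}A^{k}$, I can rewrite
\[
A^{k}\bar{A}^{k}A^{k}=(A^{k}\bar{A}^{k})A^{k}=(\bar{A}^{k}A^{k})A^{k}=\bar{A}^{k}A^{2k},
\]
so the whole statement collapses to proving the single identity $\bar{A}^{k}A^{2k}=A^{k}$. The effect of this step is that the two outer factors of $A^{k}$ have been merged, and all remaining work concerns how $\bar{A}$ interacts with powers of $A$. The essential difficulty throughout is that $A$ and $\bar{A}$ are \emph{not} assumed to commute; only $A^{k}$ commutes with $\bar{A}$ (the second equation of \eqref{kki}), so I cannot simply push factors past one another and must manufacture a usable cancellation rule.

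Second, I would derive such a rule from the $\{1\}$-inverse relation $A\bar{A}A=A$. Multiplying this on the left by $A^{k-1}$ gives $A^{k}\bar{A}A=A^{k}$, and then invoking the commutation $A^{k}\bar{A}=\bar{A}A^{k}$ from \eqref{kki} turns this into the shift identity $\bar{A}A^{k+1}=A^{k}$. Right-multiplying repeatedly by $A$ propagates it upward, yielding
\[
\bar{A}A^{m}=A^{m-1}\qquad\text{for every }m\ge k+1.
\]
This is the engine of the proof: each application of $\bar{A}$ on the left of a sufficiently high power of $A$ lowers that power by one.

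Finally, I would feed $\bar{A}^{k}A^{2k}$ into this engine, peeling off one $\bar{A}$ at a time. Applying the shift identity $k$ successive times lowers the exponent from $2k$ down through $2k-1,2k-2,\dots$ to $k$, producing $\bar{A}^{k}A^{2k}=\bar{A}^{k-1}A^{2k-1}=\cdots=A^{k}$; combined with the first step this gives $A^{k}\bar{A}^{k}A^{k}=A^{k}$. The one thing to watch carefully---and the main obstacle---is the bookkeeping of exponents: the shift rule is only licensed when the power of $A$ is at least $k+1$, so I must verify that at every one of the $k$ peeling steps the exponent still exceeds $k$ (the tightest case being the last step, where it is exactly $k+1$). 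Granting that check, the telescoping is routine and the lemma follows.
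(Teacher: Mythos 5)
Your proof is correct and is essentially the paper's own argument: both rest on the shift identity $\bar{A}A^{k+1}=A^{k}$, derived in exactly the same way from $A\bar{A}A=A$ and the commutation $A^{k}\bar{A}=\bar{A}A^{k}$, together with Lemma~\ref{L2:L24}, and your telescoping $\bar{A}^{k}A^{2k}=\bar{A}^{k-1}A^{2k-1}=\cdots=A^{k}$ is the paper's chain $A^{k}=\bar{A}A^{k}A=\bar{A}^{2}A^{k}A^{2}=\cdots=\bar{A}^{k}A^{k}A^{k}$ read in the reverse direction. The only cosmetic difference is that you invoke Lemma~\ref{L2:L24} at the beginning (to reduce to $\bar{A}^{k}A^{2k}=A^{k}$) rather than at the end.
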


\noindent
\begin{proof} Let us note $A^{k} = A^{k-1} \!\!\!\!\!\!\mathop{\underbrace{A}}\limits_{(=A\bar{A}A)}\!\!\!\!
= A^{k-1} A \bar{A} A = \mathop{\underbrace{A^{k}\bar{A}}}\limits_{(=\bar{A}A^{k})} \!\!\! A = \bar{A}A^{k+1}$.
Therefore,

\smallskip

\qquad
$\!A^{k} = \bar{A} (A^{k}) \, A = \bar{A} (\bar{A} A^{k+1}) \,  A = \bar{A}^2 A^{k} A^{2} = \ldots = \bar{A}^{k} A^{k} A^{k}
\mathop{=}\limits_{\mbox{\tiny L. \ref{L2:L24}.}}
A^{k} \bar{A}^{k} A^{k}$.
\end{proof}

\noindent
\begin{lemma} \label{L2:L26}
For any particular solution $X_{0}$ of the matrix system \eqref{kki} the equalities
$$
X_{0} A^{k} \bar{A}^{k} = A^{k} \bar{A}^{k+1}
\quad  \textit{\mbox{and}} \quad
\bar{A}^{k} A^{k} X_{0} = A^{k} \bar{A}^{k+1}
$$ are true.
\end{lemma}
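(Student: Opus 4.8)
The plan is to treat $\bar{A}$ as itself a solution of \eqref{kki} and to phrase everything in terms of the difference $D=X_{0}-\bar{A}$. Since both $X_{0}$ and $\bar{A}$ satisfy $AYA=A$ and $A^{k}Y=YA^{k}$, subtracting gives the two working relations $ADA=0$ and $A^{k}D=DA^{k}$. The commutation relation is what upgrades the weak identity $ADA=0$ to full $(k+1)$-power annihilation: indeed $A^{k+1}D=A\,A^{k}D=A\,DA^{k}=(ADA)A^{k-1}=0$ and, symmetrically, $DA^{k+1}=DA^{k}A=A^{k}DA=A^{k-1}(ADA)=0$. These two identities $A^{k+1}D=0$ and $DA^{k+1}=0$ are the engine of the proof; note that $A^{k-1}$ makes sense since $k\in\mathbb{N}$.

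Next I would substitute $X_{0}=\bar{A}+D$ into each claimed equality and dispose of the $\bar{A}$-part. Using the commutation $\bar{A}A^{k}=A^{k}\bar{A}$ one gets $\bar{A}A^{k}\bar{A}^{k}=A^{k}\bar{A}^{k+1}$, while Lemma \ref{L2:L24} gives $\bar{A}^{k}A^{k}\bar{A}=A^{k}\bar{A}^{k}\bar{A}=A^{k}\bar{A}^{k+1}$. Thus both right-hand sides $A^{k}\bar{A}^{k+1}$ are already produced by the $\bar{A}$-part, and the two assertions reduce to the single-type statements
$$
D A^{k}\bar{A}^{k}=0 \qquad\text{and}\qquad \bar{A}^{k}A^{k}D=0 .
$$

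The crux is then to manufacture enough factors of $A$ immediately next to $D$ so that $A^{k+1}D=0$ (respectively $DA^{k+1}=0$) can be applied. For this I would use the two reduction identities $A^{k}=A^{2k}\bar{A}^{k}$ and $A^{k}=\bar{A}^{k}A^{2k}$, both of which follow at once from Lemma \ref{L2:L25} in the form $A^{k}=A^{k}\bar{A}^{k}A^{k}$ by shifting $\bar{A}^{k}$ to one side via Lemma \ref{L2:L24}. Substituting $A^{k}=A^{2k}\bar{A}^{k}$ into $DA^{k}\bar{A}^{k}$ turns it into $DA^{2k}\bar{A}^{2k}=(DA^{k+1})A^{k-1}\bar{A}^{2k}=0$, and substituting $A^{k}=\bar{A}^{k}A^{2k}$ into $\bar{A}^{k}A^{k}D$ turns it into $\bar{A}^{2k}A^{2k}D=\bar{A}^{2k}A^{k-1}(A^{k+1}D)=0$. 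I expect the only genuine obstacle to be spotting these two reduction identities and matching the correct one to each side; everything else is bookkeeping with the annihilation relations of the first paragraph.
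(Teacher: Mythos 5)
Your proof is correct, and it takes a genuinely different route from the paper's. The paper argues by direct computation: using Lemma \ref{L2:L25} to expand an inner $A^{k}$ and Lemma \ref{L2:L24} to commute powers, it reduces both $X_{0}A^{k}\bar{A}^{k}$ and $A^{k}\bar{A}^{k+1}$ to the common normal form $A^{2k-1}\bar{A}^{2k}$, and then dismisses the second equality with ``proved similarly.'' You instead exploit the affine structure of the solution set: writing $X_{0}=\bar{A}+D$, the difference $D$ satisfies $ADA=0$ and $A^{k}D=DA^{k}$, hence $A^{k+1}D=DA^{k+1}=0$, and the lemma collapses to the vanishing statements $DA^{k}\bar{A}^{k}=0$ and $\bar{A}^{k}A^{k}D=0$, which you obtain from the reduction identities $A^{k}=A^{2k}\bar{A}^{k}=\bar{A}^{k}A^{2k}$ (quick consequences of Lemmas \ref{L2:L24} and \ref{L2:L25}). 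Each step checks out, including the use of $k\geq 1$ to write $A^{2k}=A^{k+1}A^{k-1}$. Both arguments rest on the same two lemmas, but yours explains \emph{why} the right-hand side is exactly $A^{k}\bar{A}^{k+1}=\bar{A}A^{k}\bar{A}^{k}$: the maps $X\mapsto XA^{k}\bar{A}^{k}$ and $X\mapsto\bar{A}^{k}A^{k}X$ kill differences of solutions, so they are constant on the solution set, and the lemma is just the evaluation of that constant at $X=\bar{A}$. Your treatment is also symmetric in the two equalities, whereas the paper proves only one in detail; what the paper's approach buys in exchange is a single self-contained chain of equalities with no auxiliary object $D$.
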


\noindent
\begin{proof} We are going to prove the first equality.

\bigskip
\noindent
$$
\begin{array}{rcl}
X_{0}\!\!\!\!\!\mathop{\underbrace{A^{k}}}\limits_{(\mathop{=}\limits_{\mbox{\tiny L.\ref{L2:L25}.}} \! A^{k}\bar{A}^{k} A^{k})}\!\!\!\!\!
\bar{A}^{k}
&\!\!\!=\!\!\!&\mathop{\underbrace{X_{0}A^{k}}}\limits_{(=A^{k}X_{0})}
\!\mathop{\underbrace{\bar{A}^{k}A^{k}}}\limits_{(\mathop{=}\limits_{\mbox{\tiny L.\ref{L2:L24}.}}
\!\!A^{k}\bar{A}^{k})}\bar{A}^{k}
=
A^{k}X_{0}A^{k}\bar{A}^{2k}

\smallskip                                                             \\[0.5 ex]
&\!\!\!=\!\!\!&
A^{k-1}\mathop{\underbrace{AX_{0}A}}\limits_{(=A)}A^{k-1}\bar{A}^{2k}
=
A^{k-1}AA^{k-1}\bar{A}^{2k}
=
A^{2k-1}\bar{A}^{2k},
\end{array}
$$

$$
\begin{array}{rcl}
\mathop{\underbrace{ A^{k}}}\limits_{(\mathop{=}\limits_{\mbox{\tiny L.\ref{L2:L25}.}} \! A^{k}\bar{A}^{k} A^{k})}\!\!\!\!\!\bar{A}^{k+1}
&\!\!\!=\!\!\!&
A^{k}\!\!\!\!\!\mathop{\underbrace{\bar{A}^{k}A^{k}}}
\limits_{(\mathop{=}\limits_{\mbox{\tiny L.\ref{L2:L24}.}} \!\! A^{k}\bar{A}^{k})}\!\!\!\!\!\bar{A}^{k+1}
=
A^{2k}\bar{A}^{2k+1}
=
A^{k}\!\!\!\mathop{\underbrace{A^{k}\bar{A}}}\limits_{(=\bar{A}A^{k})}\!\!\!\bar{A}^{2k}
=
A^{k}\bar{A}A^{k}\bar{A}^{2k}

\smallskip                                                             \\[0.5 ex]
&\!\!\!=\!\!\!&
A^{k-1}\mathop{\underbrace{A\bar{A}A}}\limits_{(=A)}A^{k-1}\bar{A}^{2k}
=
A^{k-1}AA^{k-1}\bar{A}^{2k}
=
A^{2k-1}\bar{A}^{2k}.
\end{array}
$$

\medskip
\noindent
Hence, $X_{0} A^{k} \bar{A}^{k} = A^{k} \bar{A}^{k+1}.$ The second equality is proved similarly.
\end{proof}

\break

\noindent
\begin{lemma}\label{L2:L27}
Let $\hat{X}=\bar{A}A\bar{A}$. If the matrix system \eqref{kki} is consistent, the
equivalence
\begin{equation*}
\begin{array}{l}
AXA=A \hspace*{1 mm} \wedge \hspace*{1 mm} A^{k}X=XA^{k} \quad \Longleftrightarrow \quad                           \\ [2.0 ex]
X\!=\!f(X)\!=\!\hat{X}\!+\!X\!-\!(I\!-\!\bar{A}A)XA^{k}\bar{A}^{k}\!-\!\bar{A}^{k}A^{k}X(I\!-\!A\bar{A})\!-\!\bar{A}AXA\bar{A}
\end{array}
\end{equation*}
is  true.
\end{lemma}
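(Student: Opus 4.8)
The plan is to establish the equivalence by proving the two implications separately, in the spirit of Lemmas \ref{L2:L21} and \ref{L2:L23}. Throughout I will use the defining relations $A\bar{A}A=A$ and $A^{k}\bar{A}=\bar{A}A^{k}$ (the latter holds because $\bar{A}$ itself solves the system), together with the identities $A^{k}\bar{A}^{k}=\bar{A}^{k}A^{k}$ (Lemma \ref{L2:L24}), $A^{k}\bar{A}^{k}A^{k}=A^{k}$ (Lemma \ref{L2:L25}), and the two equalities of Lemma \ref{L2:L26}. I will also extract from the proof of Lemma \ref{L2:L25} the auxiliary relations $\bar{A}A^{k+1}=A^{k}$ and $A^{k+1}\bar{A}=A^{k}$, from which $A^{k}\bar{A}A=A^{k}=A\bar{A}A^{k}$ follow at once; these are the technical heart of the argument.

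For the implication $(\Rightarrow)$ I assume $AXA=A$ and $A^{k}X=XA^{k}$ and compute $f(X)-X$. The term $\bar{A}AXA\bar{A}$ collapses to $\bar{A}A\bar{A}=\hat{X}$ via $AXA=A$, cancelling the leading $\hat{X}$, so it remains only to verify $(I-\bar{A}A)XA^{k}\bar{A}^{k}+\bar{A}^{k}A^{k}X(I-A\bar{A})=0$. I substitute $XA^{k}\bar{A}^{k}=A^{k}\bar{A}^{k+1}$ and $\bar{A}^{k}A^{k}X=A^{k}\bar{A}^{k+1}$ from Lemma \ref{L2:L26}. The first summand becomes $A^{k}\bar{A}^{k+1}-\bar{A}A^{k+1}\bar{A}^{k+1}=0$ by $\bar{A}A^{k+1}=A^{k}$, and the second becomes $A^{k}\bar{A}^{k+1}-A^{k}\bar{A}^{k+1}A\bar{A}=0$ after rewriting $A^{k}\bar{A}^{k+1}=\bar{A}^{k+1}A^{k}$ and applying $A^{k+1}\bar{A}=A^{k}$. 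Hence $X=f(X)$.

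For $(\Leftarrow)$ the cleanest route is to show that $f(Y)$ is a solution for \emph{every} $Y$, and then specialise to $Y=X$. First, $Af(Y)A=A$: the factors $A(I-\bar{A}A)$ and $(I-A\bar{A})A$ both vanish (each equals $A-A\bar{A}A$), the term $A\bar{A}AYA\bar{A}A$ reduces to $AYA$ by $A\bar{A}A=A$, and $A\hat{X}A=A$, so the two copies of $AYA$ cancel. Second, and decisively, $A^{k}f(Y)=f(Y)A^{k}=\bar{A}A^{k}$ independently of $Y$. On the left, $A^{k}(I-\bar{A}A)=A^{k}-A^{k}\bar{A}A=0$ kills the third term, while $A^{k}\bar{A}^{k}A^{k}=A^{k}$ (Lemma \ref{L2:L25}) and $A^{k}\bar{A}A=A^{k}$ reduce the fourth and fifth terms so that all $Y$-dependent contributions cancel in a chain, leaving $A^{k}\hat{X}=\bar{A}A^{k}$; the right-multiplication is symmetric, using $(I-A\bar{A})A^{k}=0$ and $A\bar{A}A^{k}=A^{k}$. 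Setting $Y=X$ and invoking $X=f(X)$ then gives $AXA=A$ and $A^{k}X=XA^{k}$.

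The main obstacle is purely the bookkeeping needed to make all five terms of $f$ cancel in the correct pattern. The crux is the pair of auxiliary identities $\bar{A}A^{k+1}=A^{k}$ and $A^{k+1}\bar{A}=A^{k}$ (hence $A^{k}\bar{A}A=A\bar{A}A^{k}=A^{k}$): these are exactly what turn the idempotent factors $I-\bar{A}A$ and $I-A\bar{A}$ into annihilators at the $A^{k}$ level, and once they are available both directions collapse to short cancellations. The observation that $A^{k}f(Y)=f(Y)A^{k}=\bar{A}A^{k}$ is constant in $Y$ is what renders the converse essentially immediate.
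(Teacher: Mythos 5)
Your proposal is correct, and while its forward direction matches the paper's, your converse direction takes a genuinely different --- and in fact logically cleaner --- route. In the forward direction both you and the paper reduce everything to Lemma \ref{L2:L26} (legitimately applicable there, since $X$ is assumed to solve \eqref{kki}), to the collapse $\bar{A}AXA\bar{A}=\bar{A}A\bar{A}=\hat{X}$, and to the auxiliary identities $\bar{A}A^{k+1}=A^{k}$ and $A^{k+1}\bar{A}=A^{k}$ extracted from the proof of Lemma \ref{L2:L25}; the bookkeeping differs only superficially. For the converse, the paper substitutes $X=f(X)$ and computes $Af(X)A$, $A^{k}f(X)$, $f(X)A^{k}$, but in the latter two computations it cites Lemma \ref{L2:L26} for $XA^{k}\bar{A}^{k}=A^{k}\bar{A}^{k+1}$ and $\bar{A}^{k}A^{k}X=A^{k}\bar{A}^{k+1}$, even though Lemma \ref{L2:L26} is stated only for solutions of the system --- which $X$ is not yet known to be at that point; as written this step is circular (it is repairable, e.g.\ by simplifying $A^{k}XA^{k}\bar{A}^{k}=A^{k-1}(AXA)A^{k-1}\bar{A}^{k}=A^{2k-1}\bar{A}^{k}$ using only the already-established $AXA=A$). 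Your proof avoids this issue entirely: you establish the unconditional fact that $f(Y)$ solves \eqref{kki} for \emph{every} $Y$, using only identities in $A$ and $\bar{A}$ (namely $A(I-\bar{A}A)=0=(I-A\bar{A})A$, Lemma \ref{L2:L25}, and $A^{k}\bar{A}A=A^{k}=A\bar{A}A^{k}$), and then specialize to $Y=X$. This buys two things: the converse needs no property of $X$ beyond $X=f(X)$, so no circularity can arise; and the statement ``$f(Y)$ is a solution for all $Y$'' is precisely the first half of what Theorem \ref{T2:T25} requires of the general-solution formula, so your computation does double duty. The only extra cost is the one-line verification of $A^{k+1}\bar{A}=A^{k}$ (the mirror image of the identity $\bar{A}A^{k+1}=A^{k}$ appearing explicitly in the proof of Lemma \ref{L2:L25}).
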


\vspace*{-4.0 mm}

\begin{proof} $\Longrightarrow ):$ Suppose that $AXA=A \hspace*{1.5 mm} \wedge \hspace*{1.5 mm}
A^{k}X=XA^{k}$. Then,
$$\bar{A}AXA^{k}\bar{A}^{k}
=\bar{A}\mathop{\underbrace{AXA}}\limits_{(=A)}A^{k-1}\bar{A}^{k}
=\mathop{\underbrace{\bar{A}A^{k}}}\limits_{(=A^{k}\bar{A})}\!\!\bar{A}^{k}=
A^{k}\bar{A}\bar{A}^{k}=A^{k}\bar{A}^{k+1}.$$
Bearing in mind that $X A^{k}\bar{A}^{k} = A^{k}\bar{A}^{k+1}$ (Lemma \ref{L2:L26}), we conclude that
$$
XA^{k}\bar{A}^{k} = \bar{A}AXA^{k}\bar{A}^{k}.
$$

\smallskip
\noindent
In a similar way we get that
$
\bar{A}^{k}A^{k}X=\bar{A}^{k}A^{k}XA\bar{A} .
$

\medskip
\noindent
Therefore,

\medskip
\noindent
$X=\bar{A}A\bar{A}+X-XA^{k}\bar{A}^{k}+\bar{A}AXA^{k}\bar{A}^{k}-\bar{A}^{k}A^{k}X
+\bar{A}^{k}A^{k}XA\bar{A}-\bar{A}\!\!\!\!\mathop{\underbrace{A}}\limits_{(=AXA)}\!\!\!\!\bar{A}$

$=\bar{A}A\bar{A}+X-XA^{k}\bar{A}^{k}+\bar{A}AXA^{k}\bar{A}^{k}-\bar{A}^{k}A^{k}X
+\bar{A}^{k}A^{k}XA\bar{A}-\bar{A}AXA\bar{A}$

\medskip
$=\bar{A}A\bar{A}+X-(I-\bar{A}A)XA^{k}\bar{A}^{k}-\bar{A}^{k}A^{k}X(I-A\bar{A})
-\bar{A}AXA\bar{A}$

\medskip
$=f(X).$

\medskip
\noindent
$\Longleftarrow):$ Suppose that
\mbox{$X=f(X)=\bar{A}A\bar{A}+X-(I-\bar{A}A)XA^{k}\bar{A}^{k}-\bar{A}^{k}A^{k}X(I-A\bar{A})$}
$-\bar{A}AXA\bar{A}$.
Then,
$$
\begin{array}{rcl}
AXA &\!\!\!=\!\!\!& Af(X)A

\medskip                                                             \\[2.0 ex]
&\!\!\!=\!\!\!&
A{\big (}\bar{A}A\bar{A}+X-(I-\bar{A}A)XA^{k}\bar{A}^{k}-\bar{A}^{k}A^{k}X(I-A\bar{A})
-\bar{A}AXA\bar{A}{\big )}A

\medskip                                                             \\[2.5 ex]
&\!\!\!=\!\!\!&
\mathop{\underbrace{A\bar{A}A}}\limits_{
(=A)}\bar{A}A\!+\!AXA\!-\!\mathop{\underbrace{A(I\!-\!\bar{A}A)}}\limits_{
(=0)}XA^{k}\bar{A}^{k}A\!-\!A\bar{A}^{k}A^{k}X\mathop{\underbrace{(I\!-\!A\bar{A})A}}\limits_{
(=0)}\!-\!\mathop{\underbrace{A\bar{A}A}}\limits_{
(=A)}X\mathop{\underbrace{A\bar{A}A}}\limits_{(=A)}

\medskip                                                             \\[2.5 ex]
&\!\!\!=\!\!\!&
\mathop{\underbrace{A\bar{A}A}}\limits_{(=A)}+AXA-AXA

\medskip                                                             \\[2.5 ex]
&\!\!\!=\!\!\!&A,
\end{array}
$$
$$
\begin{array}{rcl}
A^{k}X &\!\!\!=\!\!\!& A^{k}f(X)

\medskip                                                             \\[1.5 ex]
&\!\!\!=\!\!\!&
A^{k}{\big (}\bar{A}A\bar{A}+X-(I-\bar{A}A)XA^{k}\bar{A}^{k}-\bar{A}^{k}A^{k}X(I-A\bar{A})
-\bar{A}AXA\bar{A}{\big )}

\medskip                                                             \\[1.5 ex]
&\!\!\!=\!\!\!&
A^{k}\bar{A}A\bar{A}+A^{k}X\!-\!A^{k}(I\!-\!\bar{A}A)XA^{k}\bar{A}^{k}
\!-\!\mathop{\underbrace{A^{k}\bar{A}^{k}A^{k}}}\limits_{(\mathop{=}\limits_{\mbox{\tiny L.\ref{L2:L25}.}} \! A^{k})}
X(I\!-\!A\bar{A})\!-\!A^{k}\bar{A}\mathop{\underbrace{AXA}}\limits_{(=A)}\bar{A}

\medskip                                                             \\[1.5 ex]
&\!\!\!=\!\!\!&A^{k}\bar{A}A\bar{A}+
A^{k}X\!-\!A^{k}XA^{k}\bar{A}^{k}+A^{k}\bar{A}AXA^{k}\bar{A}^{k}
\!-\!A^{k}X+A^{k}XA\bar{A}\!-\!A^{k}\bar{A}A\bar{A}

\medskip                                                             \\[1.5 ex]
&\!\!\!=\!\!\!&
-A^{k}\!\!\!\!\mathop{\underbrace{XA^{k}\bar{A}^{k}}}
\limits_{(\mathop{=} \limits_{\mbox{\tiny L.\ref{L2:L26}.}} \!\! A^{k}\bar{A}^{k+1})}\!\!+ A^{k}\bar{A}
\mathop{\underbrace{AXA}}\limits_{(=A)}A^{k-1}\bar{A}^{k}
+A^{k-1}\mathop{\underbrace{AXA}}\limits_{(=A)}\bar{A}

\medskip                                                             \\[1.5 ex]
&\!\!\!=\!\!\!&
-A^{k}A^{k}\bar{A}^{k+1}+A^{k}\!\!\!\!\mathop{\underbrace{\bar{A}A^{k}}}\limits_{(=A^{k}\bar{A})}
\!\!\bar{A}^{k}+A^{k}\bar{A}

\medskip                                                             \\[1.5 ex]
&\!\!\!=\!\!\!&
-A^{2k}\bar{A}^{k+1}+A^{k}A^{k}\bar{A}\bar{A}^{k}+A^{k}\bar{A}

\medskip                                                             \\[1.5 ex]
&\!\!\!=\!\!\!&
-A^{2k}\bar{A}^{k+1}+A^{2k}\bar{A}^{k+1}+A^{k}\bar{A}

\medskip                                                             \\[1.5 ex]
&\!\!\!=\!\!\!&
A^{k}\bar{A},
\end{array}
$$

$$
\begin{array}{rcl}
XA^{k}&\!\!\!=\!\!\!&f(X)A^{k}

\medskip                                                             \\[1.5 ex]
&\!\!\!=\!\!\!&
{\big (}\bar{A}A\bar{A}+X-(I-\bar{A}A)XA^{k}\bar{A}^{k}-\bar{A}^{k}A^{k}X(I-A\bar{A})
-\bar{A}AXA\bar{A}{\big )}A^{k}

\medskip                                                             \\[1.5 ex]
&\!\!\!=\!\!\!&
\bar{A}A\bar{A}A^{k}\!+\!XA^{k}\!-\!(I\!-\!
\bar{A}A)X\mathop{\underbrace{A^{k}\bar{A}^{k}A^{k}}}\limits_{(\mathop{=}\limits_{\mbox{\tiny L.\ref{L2:L25}.}} \! A^{k})}
\!-\!\bar{A}^{k}A^{k}X(I\!-\!A\bar{A})A^{k}\!-\!
\bar{A}\mathop{\underbrace{AXA}}\limits_{(=A)}\bar{A}A^{k}

\medskip                                                             \\[1.5 ex]
&\!\!\!=\!\!\!&
\bar{A}A\bar{A}A^{k}\!+\!XA^{k}\!-\!XA^{k}\!+\!\bar{A}AXA^{k}
\!-\!\bar{A}^{k}A^{k}XA^{k}\!+\!\bar{A}^{k}A^{k}XA\bar{A}A^{k}\!-\!\bar{A}A\bar{A}A^{k}

\medskip                                                             \\[1.5 ex]
&\!\!\!=\!\!\!&
\bar{A}\mathop{\underbrace{AXA}}\limits_{(=A)}A^{k-1}-\!\!\!\!
\mathop{\underbrace{\bar{A}^{k}A^{k}X}}\limits_{(\mathop{=}\limits_{\mbox{\tiny L.\ref{L2:L26}.}}
\!\! A^{k}\bar{A}^{k+1})}\!\!\!\!A^{k}+\bar{A}^{k}A^{k}XA\bar{A}A^{k}

\medskip                                                             \\[1.5 ex]
&\!\!\!=\!\!\!&
\bar{A}A^{k}-A^{k}\bar{A}^{k+1}A^{k}+\bar{A}^{k}A^{k}XA\bar{A}A^{k}

\medskip                                                             \\[1.5 ex]
&\!\!\!=\!\!\!&
\bar{A}A^{k}-A^{k}\bar{A}^{k+1}A^{k}+
\bar{A}^{k}A^{k-1}\mathop{\underbrace{AXA}}\limits_{(=A)}\bar{A}A^{k}

\medskip                                                             \\[1.5 ex]
&\!\!\!=\!\!\!&
\bar{A}A^{k}-A^{k}\bar{A}^{k+1}A^{k}+\!\!
\mathop{\underbrace{\bar{A}^{k}A^{k}}}
\limits_{(\mathop{=}\limits_{\mbox{\tiny L.\ref{L2:L24}.}}\!\!\!
A^{k}\bar{A}^{k})}\!\!\!\!\bar{A}A^{k}

\medskip                                                             \\[1.5 ex]
&\!\!\!=\!\!\!&
\bar{A}A^{k}-A^{k}\bar{A}^{k+1}A^{k}+A^{k}\bar{A}^{k+1}A^{k}

\medskip                                                             \\[1.5 ex]
&\!\!\!=\!\!\!&
\bar{A}A^{k}.
\end{array}
$$

\noindent
From $A^{k}\bar{A}=\bar{A}A^{k}$ we see that $A^{k}X=XA^{k}.$
\end{proof}

\break

\noindent
\begin{remark} \label{R2:R26}
It is easy to show that $f^{2}(Y)=f(Y)$ i.e. the function $f$ satisfies the condition of
reproductivity. Therefore, if the matrix system \eqref{kki} is consistent, it is equivalent to
the reproductive matrix equation $X \!=\!f(X)$.
\end{remark}

\noindent
Based on the previous remark and Theorem \ref{T1:T12} we conclude that the following theorem is
true.

\noindent
\begin{theorem} \label{T2:T25}
If the matrix system \eqref{kki} is consistent, the general solution of the matrix system
\eqref{kki} is given by the formula
\begin{equation*}
X = f(Y) =\!\bar{A}A\bar{A} + Y - (I\!-\!\bar{A}A)Y\!A^{k}\!\bar{A}^{k} -
\bar{A}^{k}\!A^{k}Y(I\!-\!A\bar{A}) - \bar{A}AY\!A\bar{A},
\end{equation*}
where $Y$ is an arbitrary matrix corresponding dimensions.
\end{theorem}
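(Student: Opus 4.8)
The plan is to reduce Theorem \ref{T2:T25} to the abstract reproductivity framework of Section 1, exactly as was done for Theorems \ref{T2:T21} and \ref{T2:T23}. Two ingredients are already available. First, Lemma \ref{L2:L27}, under the standing hypothesis that \eqref{kki} is consistent, establishes the equivalence between the system $AXA=A \wedge A^{k}X=XA^{k}$ and the single equation $X=f(X)$, where $f$ is the self-map of $\mathbb{C}^{p\times p}$ appearing in the statement. Second, Remark \ref{R2:R26} records that $f\circ f=f$, i.e. that $f$ satisfies the condition of reproductivity \eqref{UR}. Granting these, the system \eqref{kki} is equivalent to the reproductive equation $X=f(X)$, and the conclusion is immediate from Theorem \ref{T1:T12}: the general solution is $X=f(Y)$ for arbitrary $Y$.

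To make the argument self-contained I would verify the two defining clauses of a general solution directly. For the first clause, any matrix of the form $f(Y)$ solves the system, because $f(f(Y))=f(Y)$ by reproductivity, so $X=f(Y)$ satisfies $X=f(X)$ and hence, by the equivalence of Lemma \ref{L2:L27}, solves \eqref{kki}. For the second clause, if $X$ is any solution of \eqref{kki}, then Lemma \ref{L2:L27} gives $X=f(X)$, so taking $Y=X$ exhibits $X$ in the required form $X=f(Y)$. Since the particular solution built into $f$ is exactly $\hat{X}=\bar{A}A\bar{A}$, this last observation in fact delivers the stronger statement $J(X)\Rightarrow X=f(X)$, so that $X=f(Y)$ is a \emph{reproductive} general solution and not merely a general one.

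The genuine work in this development lies not in the present theorem but in the two results it leans on, and that is where I expect the only real obstacle. Lemma \ref{L2:L27} is the crux: proving the forward implication requires combining the defining relations $AXA=A$ and $A^{k}X=XA^{k}$ with the commutation and absorption identities $A^{k}\bar{A}^{k}=\bar{A}^{k}A^{k}$ and $A^{k}\bar{A}^{k}A^{k}=A^{k}$ from Lemmas \ref{L2:L24} and \ref{L2:L25}, together with the identities $XA^{k}\bar{A}^{k}=A^{k}\bar{A}^{k+1}$ and $\bar{A}^{k}A^{k}X=A^{k}\bar{A}^{k+1}$ from Lemma \ref{L2:L26}, so that the four correction terms defining $f(X)$ collapse to zero. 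Verifying $f\circ f=f$ in Remark \ref{R2:R26} is a direct but bookkeeping-heavy computation resting on the same absorption identities. With those in place, Theorem \ref{T2:T25} follows at once from Theorem \ref{T1:T12}, and no further matrix manipulation is needed.
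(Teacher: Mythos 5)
Your proof is correct and takes essentially the same route as the paper: the paper derives Theorem \ref{T2:T25} by combining the equivalence of Lemma \ref{L2:L27} with the reproductivity of $f$ noted in Remark \ref{R2:R26} and then invoking Theorem \ref{T1:T12}, exactly as you do. Your explicit verification of the two clauses of a general (indeed reproductive) solution simply unpacks the proof of Theorem \ref{T1:T12} and is consistent with the paper's reasoning.
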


\medskip
\noindent
In \cite{Keckic85} {\sc J.D. Ke\v cki\' c} also proved this theorem, but his proof is different
from the previously exposed proof.

\noindent
The following theorem is an extension of the previous theorem.

\noindent
\begin{theorem} \label{T2:T26}
If $X_{0}$ is a particular solution of the matrix equation \eqref{kki}, the general solution of
the matrix equation \eqref{kki} is given by the formula
\begin{equation}
\label{porkki}
X = g(Y) =\!X_{0} + Y - (I\!-\!\bar{A}A)Y\!A^{k}\!\bar{A}^{k} -
\bar{A}^{k}\!A^{k}Y(I\!-\!A\bar{A}) - \bar{A}AY\!A\bar{A}.
\end{equation}
where $Y$ is an arbitrary matrix corresponding dimensions.
\end{theorem}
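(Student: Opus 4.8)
The plan is to deduce this from Theorem \ref{T2:T25} rather than to re-verify the two defining equations by hand. First I would isolate the common correction term $C(Y)=(I-\bar{A}A)YA^{k}\bar{A}^{k}+\bar{A}^{k}A^{k}Y(I-A\bar{A})+\bar{A}AYA\bar{A}$, so that $g(Y)=X_{0}+Y-C(Y)$, whereas the map $f$ of Theorem \ref{T2:T25} is $f(Y)=\hat{X}+Y-C(Y)$ with $\hat{X}=\bar{A}A\bar{A}$. Subtracting gives the single identity $g(Y)=f(Y)+(X_{0}-\hat{X})$, which reduces the whole statement to controlling the constant matrix $X_{0}-\hat{X}$, the difference of two solutions of \eqref{kki}.

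For the inclusion that every $g(Y)$ solves \eqref{kki}, I would exploit that a difference of two solutions is homogeneous. Since $X_{0}$ and $\hat{X}$ are both solutions of \eqref{kki}, the constant $X_{0}-\hat{X}$ satisfies $A(X_{0}-\hat{X})A=0$ and $A^{k}(X_{0}-\hat{X})=(X_{0}-\hat{X})A^{k}$. Combining these with $Af(Y)A=A$ and $A^{k}f(Y)=f(Y)A^{k}$ from Theorem \ref{T2:T25} yields $Ag(Y)A=Af(Y)A+A(X_{0}-\hat{X})A=A$ together with $A^{k}g(Y)-g(Y)A^{k}=\big(A^{k}f(Y)-f(Y)A^{k}\big)+\big(A^{k}(X_{0}-\hat{X})-(X_{0}-\hat{X})A^{k}\big)=0$, so $g(Y)$ solves the system for every $Y$.

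For the converse, let $X$ be an arbitrary solution and put $Y=X-X_{0}$; I would show $g(Y)=X$, i.e. that $C(X-X_{0})=0$. The forward implication in the proof of Lemma \ref{L2:L27} establishes, for \emph{every} solution, the three annihilation identities $(I-\bar{A}A)XA^{k}\bar{A}^{k}=0$, $\bar{A}^{k}A^{k}X(I-A\bar{A})=0$, and $\bar{A}AXA\bar{A}=\bar{A}A\bar{A}=\hat{X}$ (the last from $AXA=A$). Applying these to both $X$ and $X_{0}$ and subtracting, each of the three summands of $C(X-X_{0})$ vanishes, whence $g(X-X_{0})=X_{0}+(X-X_{0})-0=X$. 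Thus every solution is of the form \eqref{porkki}.

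The only real content sits in the three annihilation identities, and these are already available from Lemma \ref{L2:L27}; the extension from Theorem \ref{T2:T25} to the present statement then costs nothing beyond the observation that differences of solutions are homogeneous, so I do not anticipate a genuine obstacle. Should one prefer a self-contained argument in the style of the proof of Theorem \ref{T2:T22}, those same three identities (now also invoked for $X_{0}$) are precisely the step to pin down.
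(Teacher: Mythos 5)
Your proof is correct, and it is organized quite differently from the paper's. The paper asserts the inclusion ``every $g(Y)$ is a solution'' without detail (``it is easy to see'') and spends all its effort on the converse, which it proves by a direct chain of equalities: starting from an arbitrary solution $X$ it repeatedly inserts the constant matrix $A^{k}\bar{A}^{k+1}$ via Lemma \ref{L2:L26} (applied to both $X$ and $X_{0}$) and then regroups the result into $g(X-X_{0})$ --- in effect verifying by hand that each summand of your $C(X-X_{0})$ vanishes. Your argument reaches the same cancellations, but through previously established statements rather than by recomputation: the affine identity $g(Y)=f(Y)+(X_{0}-\hat{X})$ reduces the forward inclusion to Theorem \ref{T2:T25} together with the observation that the difference $Z=X_{0}-\hat{X}$ of two solutions satisfies the homogeneous system $AZA=0$, $A^{k}Z=ZA^{k}$ (you correctly use that $\hat{X}=\bar{A}A\bar{A}$ is itself a solution, as the paper notes); and the converse follows from the three identities $(I-\bar{A}A)XA^{k}\bar{A}^{k}=0$, $\bar{A}^{k}A^{k}X(I-A\bar{A})=0$, $\bar{A}AXA\bar{A}=\hat{X}$, valid for every solution $X$, which are exactly what the forward implication of Lemma \ref{L2:L27} establishes (equivalently, $C(X)=\hat{X}$ for every solution, i.e.\ $X=f(X)$). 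What your route buys: it is shorter, it actually supplies an argument for the inclusion the paper waves off, and it makes Remark \ref{R2:R27} ($g^{2}(Y)=g(Y)+(X_{0}-\hat{X})$) fall out immediately from the affine shift. What the paper's route buys: it is self-contained at the level of Lemma \ref{L2:L26} alone, without leaning on Theorem \ref{T2:T25} or on identities extracted from the interior of another lemma's proof.
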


\noindent
\begin{proof}
It is easy to see that the solution of the matrix system \eqref{kki} is given by
\eqref{porkki}. On the contrary, let $X$ be any solution of the matrix equation \eqref{kki},
then
\begin{eqnarray*}
X\,&\!\!\!\!=\!\!\!\!&X-\!\!\mathop{\underbrace{A^{k}\bar{A}^{k+1}}}
\limits_{(\mathop{=}\limits_{\mbox{\tiny L.\ref{L2:L26}.}} \!\! XA^{k}\bar{A}^{k})}\!\!+\!\!
\mathop{\underbrace{A^{k}\bar{A}^{k+1}}}\limits_{(\!\mathop{=}\limits_{\mbox{\tiny L.\ref{L2:L26}.}} \!\!
X_{0}A^{k}\bar{A}^{k})}\!\!\! + \,\bar{A}A^{k+1}\bar{A}^{k+1}-\bar{A}A^{k+1}\bar{A}^{k+1}-\!\!\!
\mathop{\underbrace{A^{k}\bar{A}^{k+1}}}
\limits_{(\mathop{=}\limits_{\mbox{\tiny L.\ref{L2:L26}.}} \!\! \bar{A}^{k}A^{k}X)}\\[1.5 ex]
&\!\!\!\!\!\!\!\!&+\!\!\!\mathop{\underbrace{A^{k}\bar{A}^{k+1}}}
\limits_{(\mathop{=}\limits_{\mbox{\tiny L.\ref{L2:L26}.}} \!\! \bar{A}^{k}A^{k}X)}\!\!\!A\bar{A} \;+\!\!
\mathop{\underbrace{A^{k}\bar{A}^{k+1}}}\limits_{(\mathop{=}\limits_{\mbox{\tiny L.\ref{L2:L26}.}} \!\!
\bar{A}^{k}A^{k}X_{0})} \!\!-\!\! \mathop{\underbrace{A^{k}\bar{A}^{k+1}}}
\limits_{(\mathop{=}\limits_{\mbox{\tiny L.\ref{L2:L26}.}} \!\! \bar{A}^{k}A^{k}X_{0})}\!\!\!\!
A\bar{A} \,-\, \bar{A}\!\!\!\! \mathop{\underbrace{A}}\limits_{(=AXA)}\!\!\!\!\bar{A} \,+\, \bar{A}\!\!\!
\!\!\!\mathop{\underbrace{A}}\limits_{(=AX_{0}A)}\!\!\!\!\!\!\bar{A}\\[1.5 ex]
&\!\!\!\!=\!\!\!\!&X-XA^{k}\bar{A}^{k}+X_{0}A^{k}\bar{A}^{k}+\bar{A}A
\!\!\! \mathop{\underbrace{A^{k}\bar{A}^{k+1}}}\limits_{(\mathop{=}\limits_{\mbox{\tiny L.\ref{L2:L26}.}} \!\!
XA^{k}\bar{A}^{k})}\!-\,\bar{A}A\!\!\!
\mathop{\underbrace{A^{k}\bar{A}^{k+1}}}\limits_{(\mathop{=}\limits_{\mbox{\tiny L.\ref{L2:L26}.}} \!\!
  X_{0}A^{k}\bar{A}^{k})}\!\!-\,\bar{A}^{k}A^{k}X\\[1.0 ex]
  & \!\!\!\!\!\!\!\!&+\,\bar{A}^{k}A^{k}XA\bar{A}
  +\bar{A}^{k}A^{k}X_{0}-\bar{A}^{k}A^{k}X_{0}A\bar{A}-\bar{A}AXA\bar{A}
  +\bar{A}AX_{0}A\bar{A}
\end{eqnarray*}
\begin{eqnarray*}
  &\!\!\!\!=\!\!\!\!&X_{0}\!+\!(X\!-\!X_{0})\!-\!(X\!-\!X_{0})A^{k}\bar{A}^{k}\!+\!\bar{A}A
  XA^{k}\bar{A}^{k}
 \! -\!\bar{A}A X_{0}A^{k}\bar{A}^{k}\!-\!\bar{A}^{k}A^{k}(X\!-\!X_{0}) \\[0.5 ex]
  & \!\!\!\!\!\!\!\!&+\,
  \bar{A}^{k}A^{k}(X-X_{0})A\bar{A}-\bar{A}A(X-X_{0})A\bar{A}\\[0.5 ex]
  &\!\!\!\!=\!\!\!\!&X_{0}+(X-X_{0})-(X-X_{0})A^{k}\bar{A}^{k}+
  \bar{A}A( X-X_{0})A^{k}\bar{A}^{k}-\bar{A}^{k}A^{k}(X-X_{0})\\[0.5 ex]
  & \!\!\!\!\!\!\!\!&+\,\bar{A}^{k}A^{k}(X-X_{0})A\bar{A}-\bar{A}A(X-X_{0})A\bar{A}\\[0.5 ex]
  &\!\!\!\!=\!\!\!\!&X_{0}+(X-X_{0})-(I-\bar{A}A)(X-X_{0})A^{k}\bar{A}^{k}
  -\bar{A}^{k}A^{k}(X-X_{0})(I-A\bar{A})\\[0.5 ex]
  & \!\!\!\!\!\!\!\!&-\,\bar{A}A(X-X_{0})A\bar{A}\\[0.5 ex]
  &\!\!\!\!=\!\!\!\!& X_{0} + Y - (I\!-\!\bar{A}A)Y\!A^{k}\!\bar{A}^{k} -
  \bar{A}^{k}\!A^{k}Y(I\!-\!A\bar{A}) - \bar{A}AY\!A\bar{A}\\[0.5 ex]
  &\!\!\!\!=\!\!\!\!&g(Y),
\end{eqnarray*}
where  $Y=X-X_{0}$. From this we see that every solution $ X $ of the matrix system (\ref{gps})
can be represented in the form \eqref{porkki}.
\end{proof}

\noindent
\begin{remark} \label{R2:R27}
From $g^{2}(Y)=g(Y)+(X_{0}-\hat{X})$ we conclude that the function $g$
is reproductive if and only if $X_{0}=\hat{X}.$
\end{remark}

\noindent
\begin{remark} \label{R2:R28}
The preceding result is an extension of the consideration which is given in
\cite{RadicicMalesevic} (see Application 2.2)
\end{remark}

\section{Conclusion}

\noindent
We want to emphasize that there are also other matrix equations and matrix systems whose
solutions can be analysed in the same way as we done with \eqref{kj}, \eqref{gps} and
\eqref{kki}.

\vspace{5mm}

{\bf Acknowledgment.} The research is partially supported by the Ministry of Science and Education,
Serbia, Grant No. 174032.

\bigskip

\end{document}